\documentclass[12pt,reqno]{amsart}
\usepackage{mathrsfs}\overfullrule=5pt
\usepackage{amsfonts,epsfig,amsmath,amssymb,amscd,graphicx}
\usepackage[colorlinks=true]{hyperref}
\hypersetup{urlcolor=red, citecolor=blue}
\usepackage{pstricks}
\topmargin=0pt \oddsidemargin=0pt \evensidemargin=0pt
\textwidth=15cm \textheight=22cm \raggedbottom
\input xy
\xyoption{all}

\newtheorem{thm}{Theorem}[section]
\newtheorem{lem}[thm]{Lemma}

\newtheorem{prop}[thm]{Proposition}

\theoremstyle{definition}

\theoremstyle{remark}
\newtheorem{rem}[thm]{Remark}

\numberwithin{equation}{section}

\def \N {\mathbb N}
\def \R {\mathbb R}

\begin{document}

\title{Topological entropy of nonautonomous dynamical systems}
\author[K. Liu]{Kairan Liu}
\address{Kairan Liu:
Department of Mathematics, University of Science and Technology of China, Hefei, Anhui 230026, China}
\email{lkr111@mail.ustc.edu.cn}
\author[Y. Qiao]{Yixiao Qiao $^*$}{\let\thefootnote\relax\footnote{* Corresponding author.}}
\address{Yixiao Qiao:
School of Mathematical Sciences, South China Normal University, Guangzhou, Guangdong 510631, China}
\email{yxqiao@impan.pl}
\author[L. Xu]{Leiye Xu}
\address{Leiye Xu:
Department of Mathematics, University of Science and Technology of China, Hefei, Anhui 230026, China}
\email{leoasa@mail.ustc.edu.cn}

\subjclass[2010]{37B05, 54H20}
\keywords{Entropy, nonautonomous dynamical system, induced system, finite-to-one extension}

\begin{abstract}
Let $\mathcal{M}(X)$ be the space of all Borel probability measures on a compact metric space $X$ endowed with the weak$^\ast$-topology. In this paper, we prove that if the topological entropy of a nonautonomous dynamical system $(X,\{f_n\}_{n=1}^{+\infty})$ vanishes, then so does that of its induced system $(\mathcal{M}(X),\{f_n\}_{n=1}^{+\infty})$; moreover, once the topological entropy of $(X,\{f_n\}_{n=1}^{+\infty})$ is positive, that of its induced system $(\mathcal{M}(X),\{f_n\}_{n=1}^{+\infty})$ jumps to infinity. In contrast to Bowen's inequality, we construct a nonautonomous dynamical system whose topological entropy is not preserved under a finite-to-one extension.

\end{abstract}

\maketitle

\section{Introduction}
As an important invariant of topological conjugacy, the notion of topological entropy was introduced by Adler, Konheim and McAndrew \cite{Adler-Konheim-McAndrew} in 1965. Topological entropy is a key tool to measure the complexity of a classical dynamical system, i.e. the exponential growth rate of the number of distinguishable orbits of the iterates of an endomorphism of a compact metric space. In order to have a good understanding of the topological entropy of a skew product of dynamical systems (as we know that the calculation of its topological entropy can be transformed into that of its fibers), Kolyada and Snoha \cite{Kolyada-Snoha} proposed the concept of topological entropy in 1966 for a nonautonomous dynamical system determined by a sequence of maps.

A \textbf{nonautonomous dynamical system} (NADS for short) is a pair $(X,\{f_n\}_{n=1}^{+\infty})$, where $X$ is a compact metric space endowed with a metric $\rho$ and $\{f_n\}_{n=1}^{+\infty}$ is a sequence of continuous maps from $X$ to $X$. In 2013, Kawan \cite{Kawan} generalized the classical notion of measure-theoretical entropy established by Kolmogorov and Sinai to NADSs, and proved that the measure-theoretical entropy can be estimated from above by its topological entropy. Following the idea of Brin and Katok \cite{Brin-Katok}, Xu and Zhou \cite{Xu-Zhou} introduced the measure-theoretical entropy in nonautonomous case and established a variational principle for the first time. More results related to entropy for NADSs were developed in \cite{Bis,Canovas,Huang-Wen-Zeng,Kawan,Kolyada-Misiurewicz-Snoha,Zhang-Chen,Zhu-Liu-Zhang}.

In contrast to the classical dynamical systems whose dynamics have been fully studied, properties of entropy for NADSs are still fairly poor-developed. One of such respects that we considered naturally is the relation between a NADS and its induced system (whose phase space consists of all Borel probability measures on the original space, for details see Section \ref{sec:pre}). A well-known result due to Glasner and Weiss \cite{Glasner-Weiss} in 1995 reveals that if a system has zero topological entropy, then so does its induced system. This theorem is amazing. Generally speaking, a system is rather ``tiny'' (in the sense of a subsystem) compared with its induced system. However, the vanishment of its entropy surprisingly results in the same phenomenon for its induced system. Later, this connection was further developed by Kerr and Li in \cite{Kerr-Li}. They obtained that a system is null if and only if its induced system is null (recall that a classical dynamical system is null if its topological sequence entropy along any increasing positive sequence is zero). In \cite{Qiao-Zhou}, the second named author and Zhou generalized the result of Glasner and Weiss to any increasing positive sequence for classical dynamical systems. This generalization strengthens Kerr and Li's result as well.

The present paper aims to investigate the entropy relation between a system and its induced system in the context of NADSs. We denote by $\mathcal{M}(X)$ the space of all Borel probability measures on a compact metric space $X$ equipped with the weak$^\ast$-topology. Our main result is as follows.

\begin{thm}\label{thm-A}
Let $(X,\{f_n\}_{n=1}^{+\infty})$ be a NADS. Then the following statements hold:
\begin{enumerate}
  \item $h_{top}(X,\{f_n\}_{n=1}^{+\infty})=0$ if and only if $h_{top}(\mathcal{M}(X),\{f_n\}_{n=1}^{+\infty})=0$.
  \item $h_{top}(X,\{f_n\}_{n=1}^{+\infty})>0$ if and only if $h_{top}(\mathcal{M}(X),\{f_n\}_{n=1}^{+\infty})=+\infty$.
\end{enumerate}
\end{thm}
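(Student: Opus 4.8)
The plan is to fix a metric $\hat\rho$ on $\mathcal M(X)$ compatible with the weak$^\ast$-topology --- concretely the Wasserstein-$1$ metric $\hat\rho(\mu,\nu)=\inf_\pi\int\rho\,d\pi$ --- and to work throughout with the associated Bowen metrics $\rho_n(x,y)=\max_{0\le i\le n-1}\rho(f_1^i x,f_1^i y)$ on $X$ and $\hat\rho_n(\mu,\nu)=\max_{0\le i\le n-1}\hat\rho((f_1^i)_\ast\mu,(f_1^i)_\ast\nu)$ on $\mathcal M(X)$, where $f_1^i=f_i\circ\cdots\circ f_1$. \emph{Reductions.} The map $\iota\colon x\mapsto\delta_x$ is a topological embedding intertwining the two systems and satisfying $\hat\rho(\delta_x,\delta_y)=\rho(x,y)$, so it carries $(n,\epsilon)$-separated sets to $(n,\epsilon)$-separated sets and yields $h_{top}(\mathcal M(X))\ge h_{top}(X)$. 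This already gives the backward implication of (1); and once (1) is known together with the forward implication of (2), the backward implication of (2) follows formally. Thus everything reduces to: (a) if $h_{top}(X)=0$ then $h_{top}(\mathcal M(X))=0$; and (b) if $h_{top}(X)>0$ then $h_{top}(\mathcal M(X))=+\infty$.

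For (b) I would argue by a blow-up using empirical measures. Positivity of $h_{top}(X)$ furnishes $\epsilon>0$ and $c>0$ with $\limsup_n\frac1n\log s_n(\epsilon)=c$, where $s_n(\epsilon)$ is the maximal cardinality of an $(n,\epsilon)$-separated set $E_n$. Fixing $m\in\N$, I would consider the $m$-atom empirical measures $\frac1m\sum_{j=1}^m\delta_{x_j}$ with atoms in $E_n$. Two such measures whose atom-multisets induce, at some time $0\le i\le n-1$, different distributions over a fixed $\epsilon/2$-net of $X$ are $\hat\rho_n$-separated at a scale $\epsilon_m>0$ depending only on $m$ and $\epsilon$; a counting argument then extracts from the $\binom{|E_n|}{m}$ available measures an $(n,\epsilon_m)$-separated family of cardinality $e^{(1-o(1))mcn}$. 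Hence $h_{top}(\mathcal M(X))\ge mc$ for every $m$, forcing $h_{top}(\mathcal M(X))=+\infty$. The only delicate point is that distinct multisets may induce identical time-marginals and so fail to be separated; this is handled by passing to a subfamily realizing distinct \emph{profiles} in the sense below, and positivity of entropy guarantees enough of them.

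Statement (a) is the heart of the matter and will follow the strategy of Glasner and Weiss, adapted to the nonautonomous setting, where no invariant measures or variational principle are available, so the argument must be carried out directly with spanning sets. The key structural observation is that, because $\hat\rho_n$ is the maximum over time of the Wasserstein distances of the push-forwards $(f_1^i)_\ast\mu$, at each individual time $i$ the measure $\mu$ is tested only through $(f_1^i)_\ast\mu$, and a coupling may be chosen afresh at each $i$. Consequently, if $\{B_1,\dots,B_{N_0}\}$ is a fixed $\delta$-net partition of $(X,\rho)$ --- with $N_0=N_0(\delta)$ \emph{independent of} $n$ --- then $\mu$ is determined up to $\hat\rho_n$-distance $O(\delta)$ by its \emph{profile}: the length-$n$ word whose $i$-th letter is the (suitably discretized) distribution $\big((f_1^i)_\ast\mu(B_1),\dots,(f_1^i)_\ast\mu(B_{N_0})\big)$ in the fixed finite-dimensional simplex. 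The number of possible letters is a constant $T=T(\delta)$, so an $(n,O(\delta))$-spanning set of $\mathcal M(X)$ is obtained by selecting one measure per realizable profile, whence $h_{top}(\mathcal M(X))\le\lim_{\delta\to0}\limsup_n\frac1n\log P_n(\delta)$, where $P_n(\delta)$ is the number of realizable profiles.

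The \textbf{main obstacle} is precisely to show $P_n(\delta)=e^{o(n)}$ under the hypothesis $h_{top}(X)=0$. The naive estimates fail in two opposite ways: bounding profiles by all length-$n$ words over the $T$-letter alphabet gives only the useless value $\log T$, while resolving a measure through an $(n,\delta)$-spanning set $F_n$ of $X$ forces the Bowen covering number $|F_n|$ into the exponent, and $|F_n|=e^{o(n)}$ does not suffice to control $(C/\delta)^{|F_n|}$. The resolution I would pursue is to bound the number of realizable profiles directly by a polynomial (of $\delta$-dependent degree) in the Bowen covering number $s_n(\delta)$ of $(X,\rho_n)$: this is consistent with explicit examples and, since $h_{top}(X)=0$ means $\frac1n\log s_n(\delta)\to0$, it yields $\frac1n\log P_n(\delta)\to0$ and hence $h_{top}(\mathcal M(X))=0$ after letting $\delta\to0$. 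Establishing this polynomial bound is the crux; I expect it to require an inductive, telescoping control of how the coarsened push-forward distribution can evolve from time $i$ to time $i+1$, encoding the fact that vanishing entropy forbids the exponential proliferation of distinguishable patterns that a positive-entropy system would exhibit.
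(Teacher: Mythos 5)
Your reductions (the embedding $x\mapsto\delta_x$, and the formal derivation of both backward implications) match the paper, but each of your two substantive steps has a genuine gap. For (b), the counting claim is quantitatively impossible as stated. Your only separation criterion for equal-weight empirical measures is ``distinct discretized occupation profiles over a fixed $\epsilon/2$-net of size $N_0$,'' so the separated family you extract has cardinality at most the number of profiles, which is at most $\bigl((m+1)^{N_0}\bigr)^n=e^{N_0\log(m+1)\,n}$ since each letter is an occupation vector of $m$ atoms over $N_0$ cells. This is incompatible with the claimed cardinality $e^{(1-o(1))mcn}$ as soon as $mc>N_0\log(m+1)$, i.e.\ for all large $m$ --- precisely the regime needed to force infinite entropy. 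Moreover the cap is not an artifact of the criterion: by the braiding ambiguity you yourself flag (the matching between the two atom sets may change from time $i$ to $i+1$), two distinct multisets can have time-$i$ marginals agreeing to within $O(\epsilon)$ in Wasserstein distance for \emph{every} $i$, hence fail to be $\hat\rho_n$-separated at any scale independent of $n$; so no passage to a subfamily rescues the bound $h_{top}(\mathcal M(X))\ge mc$. The paper sidesteps all of this by weighting the atoms: the map $\pi_k(x_1,\dots,x_k)=\bigl(\sum_{i=1}^{k}2^i\bigr)^{-1}\sum_{i=1}^{k}2^i\delta_{x_i}$ is injective (Proposition \ref{injective}), continuous and equivariant, hence a single fixed homeomorphism embedding $(X^k,\{f_n^{(k)}\}_{n=1}^{+\infty})$ as a subsystem of $(\mathcal M(X),\{f_n\}_{n=1}^{+\infty})$, and the product formula $h_{top}(X^k,\{f_n^{(k)}\}_{n=1}^{+\infty})=k\cdot h_{top}(X,\{f_n\}_{n=1}^{+\infty})$ (Proposition \ref{times}) gives $h_{top}(\mathcal M(X),\{f_n\}_{n=1}^{+\infty})\ge k\,h_{top}(X,\{f_n\}_{n=1}^{+\infty})$ for every $k$, with no counting at all. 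If you want to keep equal weights you must at least prove a lower bound on realizable profiles that beats the braiding fibers, and the computation above shows this cannot reach $mc$ for large $m$.

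For (a), your reduction to bounding the number $P_n(\delta)$ of realizable profiles is sound, but the proof stops exactly at the point you call the crux: the bound $P_n(\delta)=e^{o(n)}$ is essentially equivalent to the theorem itself, and you offer only a conjectured ``polynomial in $s_n(\delta)$'' estimate together with a hoped-for inductive argument, with no proof. This missing ingredient is supplied in the paper by the functional-analytic lemma of Glasner and Weiss (Lemma \ref{lemma1}): if a linear map $\phi\colon l_1^m\to l_\infty^n$ with $\|\phi\|\le1$ is such that $\phi(B_1(l_1^m))$ contains more than $2^{bn}$ $\varepsilon$-separated points, then $m\ge 2^{cn}$. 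The paper argues the contrapositive of (a): assuming $h_{top}(\mathcal M(X),\{f_n\}_{n=1}^{+\infty})>0$, it takes a partition $\{A_k\}$ subordinate to a minimal subcover of $\bigvee_{j=0}^{N_i-1}f_1^{-j}\mathcal U$ of cardinality $L_{N_i}$, finitely many test functions $g_1,\dots,g_{K_0}$, and the map $\phi(\{x_k\})=\bigl\{2^{-n}\sum_k x_k\,g_n(f_1^j z_k)\bigr\}_{1\le n\le K_0,\,0\le j\le N_i-1}$; it shows that $(N_i,\varepsilon)$-separated measures in $\mathcal M(X)$ map (via $\mu\mapsto(\mu(A_1),\dots,\mu(A_{L_{N_i}}))$) to $\varepsilon/(9\cdot 2^{K_0})$-separated vectors, and concludes $L_{N_i}\ge 2^{cN_i}$, i.e.\ $h_{top}(X,\{f_n\}_{n=1}^{+\infty})>0$. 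Your profiles are, in effect, the separated points in the image of such a $\phi$, so the Glasner--Weiss lemma is exactly the substitute for your unproven polynomial bound (and note that only this dichotomy is needed, not a literal polynomial estimate, whose truth in the form you state is unclear). Without this lemma or an equivalent, your proof of (a) is incomplete; with it, your framework collapses onto the paper's argument.
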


Note that Theorem \ref{thm-A} includes the results mentioned previously in \cite{Glasner-Weiss,Qiao-Zhou}.

\medskip

Now let us turn to considering the entropy relation between a system and its extensions. In classical dynamical systems, topology entropy, as we know, is preserved under finite-to-one extensions \cite{Bowen}. A natural question is if we may further expect such an assertion to be true for NADSs. Unfortunately, this property fails in nonautonomous case.

\begin{thm}\label{exam:extension}
There exist two NADSs $(X,\{f_n\}_{n=1}^{+\infty})$ and $(Y,\{g_n\}_{n=1}^{+\infty})$ such that $(X,\{f_n\}_{n=1}^{+\infty})$ is a finite-to-one extension of $(Y,\{g_n\}_{n=1}^{+\infty})$ but $h_{top}(X,\{f_n\}_{n=1}^{+\infty})>h_{top}(Y,\{g_n\}_{n=1}^{+\infty})$.
\end{thm}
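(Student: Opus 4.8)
The plan is to disprove the naive nonautonomous analogue of Bowen's inequality by an explicit example, exploiting the one feature of the nonautonomous setting that has no autonomous counterpart: the sequence $\{f_n\}$ need not be equicontinuous. In the autonomous case a finite-to-one factor preserves entropy because the single generator has a uniform modulus of continuity, which freezes the fibre dynamics on sufficiently small Bowen balls of the base (so that each base orbit admits only boundedly many distinguishable lifts); I would build a system in which this freezing fails uniformly in $n$, forcing a one-bit fibre to carry a full shift's worth of complexity that is invisible downstairs.

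Concretely, I would take the base $(Y,\{g_n\})$ to be the full shift space $Y=\{0,1\}^{\mathbb{N}}$ with the metric $\rho(z,z')=2^{-\min\{i:\,z_i\neq z'_i\}}$, but with \emph{every} $g_n=\mathrm{id}_Y$. Then the Bowen metric of $Y$ equals $\rho$ for all $n$, the number of $(n,\delta)$-separated points is independent of $n$, and $h_{top}(Y,\{g_n\})=0$ at once. For the extension I would set $X=Y\times\{0,1\}$ with the product metric, let $\pi:X\to Y$ be the projection (a uniformly two-to-one, hence finite-to-one, factor map), and define $f_n(z,b)=(z,z_{n-1})$, so that the $n$-th map copies the $(n-1)$-st coordinate of the base point into the fibre bit. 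Continuity and the intertwining identity $\pi\circ f_n=g_n\circ\pi$ are immediate, so $(X,\{f_n\})$ is genuinely a finite-to-one extension of $(Y,\{g_n\})$. Writing $f_1^{\,i}=f_i\circ\cdots\circ f_1$ (with $f_1^{\,0}=\mathrm{id}$), a direct computation gives $f_1^{\,i}(z,b)=(z,z_{i-1})$ for $i\ge 1$: along an orbit the fibre bit successively reveals the coordinates $z_0,z_1,z_2,\dots$ of the base point.

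The substance of the argument is the lower bound $h_{top}(X,\{f_n\})>0$. I would fix $\epsilon=1/2$ and, for each $n$, consider the $2^{n}$ points $(z,b)$ whose first $n-1$ coordinates of $z$ range over all of $\{0,1\}^{n-1}$ and whose bit $b$ ranges over $\{0,1\}$. Two distinct such points either differ in $b$ (and are then $1/2$-apart already at time $0$) or differ in some coordinate $z_j$ with $j\le n-2$, in which case their orbits carry different bits at time $j+1\le n-1$; in either case they are $(n,1/2)$-separated. Hence the maximal $(n,1/2)$-separated cardinality is at least $2^{n}$, and since $s_n(\epsilon)$ increases as $\epsilon\downarrow 0$ we obtain $h_{top}(X,\{f_n\})\ge\limsup_n\frac{1}{n}\log 2^{n}=\log 2>0=h_{top}(Y,\{g_n\})$, the desired strict inequality.

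The step I expect to demand the most care is conceptual rather than computational: one must explain why this mechanism cannot be replicated autonomously, since otherwise it would contradict Bowen's theorem. The point to articulate is that $\{f_n\}$ is \emph{not} equicontinuous — the $n$-th map resolves base points at the scale $2^{-(n-1)}\to 0$ — so no single $\delta$ keeps the fibre bit constant on all $(n,\delta)$-Bowen balls of the base simultaneously. This is precisely the equicontinuity hypothesis implicit in Bowen's inequality, and its failure is what lets $h_{top}$ jump across a finite-to-one extension; I would foreground this to make the contrast with the autonomous picture transparent. (The choice $g_n=\mathrm{id}$ is only for simplicity; any equicontinuous zero-entropy base would serve equally well.)
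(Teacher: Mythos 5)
Your proposal is correct, and it takes a genuinely different route from the paper's. Your construction uses a \emph{static} base ($Y=\{0,1\}^{\mathbb N}$ with every $g_n=\mathrm{id}_Y$, so $h_{top}(Y,\{g_n\}_{n=1}^{+\infty})=0$ immediately) together with a uniformly two-to-one projection $\pi:X=Y\times\{0,1\}\to Y$, and puts all the dynamics into the maps $f_n(z,b)=(z,z_{n-1})$, which read ever-deeper coordinates of the frozen base point into the fibre bit; the computation $f_1^{j+1}(z,b)=(z,z_j)$ is right under the paper's composition convention, your $2^n$ points are indeed $(n,1/2)$-separated (the $b\neq b'$ case at time $0$, the $z_j\neq z_j'$ case at time $j+1\leq n-1$), the monotonicity of $s_n(\varepsilon)$ in $\varepsilon$ is used correctly, and continuity of each $f_n$ and the intertwining $\pi\circ f_n=g_n\circ\pi$ are trivial, so all verifications go through. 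The paper instead builds a single space out of shift levels $\Sigma_2\times\{1/n\}$ accumulating on \emph{two} fixed points $0$ and $1$, with $f_n$ shifting the symbolic coordinate and pushing points down the levels; $Y$ is obtained by gluing $1$ to $0$, so the factor map is two-to-one at exactly one point and injective elsewhere, the upstairs entropy $\geq\log 2$ comes from a two-element open cover separating the sets accumulating at $0$ and at $1$, and the downstairs entropy vanishes because every orbit eventually enters any fixed neighbourhood of the unique accumulation point. Your version is shorter and isolates the mechanism you correctly identify (failure of equicontinuity of $\{f_n\}$, which in the autonomous case holds trivially since the sequence is constant), and since your fibres $\pi^{-1}(z)$ are invariant two-point sets of zero entropy, your example in fact also refutes the full Bowen inequality of Theorem \ref{thm:finitetoone}, which is the stronger statement the paper proves as Proposition \ref{pr}, though you do not state this explicitly. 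What the paper's more elaborate construction buys is that the entropy drop occurs even for an \emph{almost one-to-one} factor (two-to-one at a single point), a sharper pathology than your everywhere two-to-one projection; what yours buys is transparency and a degenerate base that makes $h_{top}(Y,\{g_n\}_{n=1}^{+\infty})=0$ a one-line observation rather than a cover-refinement argument.
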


Theorem \ref{exam:extension} reflects that entropy properties of NADSs may differ from that of classical dynamical systems. In particular, it indicates that the Bowen-type entropy inequality (stated in Theorem \ref{thm:finitetoone}) does not hold for NADSs in general.

\medskip

This paper is organized as follows. In Section \ref{sec:pre}, we list basic notions and results needed in our argument. In Section \ref{sec:prop}, we prove Theorem \ref{thm-A}(2). In Section \ref{sec:mainthm}, we prove Theorem \ref{thm-A}(1). In Section \ref{sec:exam}, we provide a constructive proof of Theorem \ref{exam:extension}.

\medskip

\textbf{Acknowledgements.}
The authors would like to thank Prof. Wen Huang and Dr. Lei Jin for their useful comments and suggestions. Y. Qiao was partially supported by NSFC of China (11871228). L. Xu was partially supported by NNSF of China (11801538, 11871188) and the Fundamental Funds for the Central Research Universities.

\section{Preliminaries}\label{sec:pre}

For clarification, throughout this paper by a \textbf{topological dynamical system} (TDS for short) we mean a pair $(X,T)$, where $X$ is a compact metric space endowed with a metric $\rho$ and $T: X\to X$ is a homeomorphism. A \textbf{nonautonomous dynamical system} (NADS for short) is a pair $(X,\{f_n\}_{n=1}^{+\infty})$, where $X$ is a compact metric space endowed with a metric $\rho$ and $\{f_n:X\to X\}_{n=1}^{+\infty}$ is a sequence of continuous maps. We denote by $\N$ and $\N_+$ the sets of nonnegative integers and positive integers, respectively.

\subsection{Topological entropy}
Let $(X,\{f_n\}_{n=1}^{+\infty})$ be a NADS and $\rho$ a metric on $X$. An \textbf{open cover} of $X$ is a family of open subsets of $X$, whose union is $X$. For two covers $\mathcal U$ and $\mathcal V$ we say that $\mathcal U$ is a \textbf{refinement} of $\mathcal V$ if for each $U\in\mathcal{U}$ there is $V\in\mathcal{V}$ with $U\subset V$. For $n\in\N_+$ and open covers $\mathcal{U}_1,\mathcal{U}_2,\dots,\mathcal{U}_n$ of $X$ we denote
$$\bigvee_{i=1}^{n}\mathcal{U}_i=\left\{A_1\cap A_1\cap\dots\cap A_n: A_1\in\mathcal{U}_1,A_2\in\mathcal{U}_2,\dots,A_n\in\mathcal{U}_n\right\}.$$
Note that $\bigvee_{i=1}^n\mathcal{U}_i$ is also an open cover of $X$. We denote by $\mathcal{N}(\mathcal{U})$ the minimal cardinality of all subcovers chosen from $\mathcal{U}$.
Set $$f_i^0=\mathrm{id}_X,\;\; f_i^n=f_{i+(n-1)}\circ f_{i+(n-2)}\circ\dots\circ f_{i+1}\circ f_{i},\;\;f_i^{-n}=(f_i^{n})^{-1}$$
for all $i,n\in\N_+$, where $\mathrm{id}_X$ is the identity map on $X$. Let
$$h_{top}(\{f_n\}_{n=1}^{+\infty},\mathcal{U})=\limsup\limits_{n\to+\infty}
\frac{\log{\mathcal{N}(\bigvee_{j=0}^{n-1}f_1^{-j}(\mathcal{U}))}}{n}.$$
The \textbf{topological entropy} of $(X,\{f_n\}_{n=1}^{+\infty})$ is defined by
$$h_{top}(X,\{f_n\}_{n=1}^{+\infty})=\sup\left\{h_{top}(\{f_n\}_{n=1}^{+\infty},\mathcal{U}): \mathcal{U}\text{ is an open cover of } X\right\}.$$

As we expected, there is a Bowen-like equivalent definition of topological entropy for NADSs. For each $n\in\N_+$, a compatible metric $\rho_n$ on $X$ is defined  by the formula $$\rho_{n}(x,y)=\max_{0\leq j\leq n-1}\rho(f_1^jx,f_1^jy).$$
For any $n\in\N_+$ and $\varepsilon>0$, a subset $F$ of $X$ is called an \textbf{$(n,\varepsilon)$-spanning} subset of $(X,\{f_n\}_{n=1}^{+\infty})$ if for any $x\in X$ there exists $y\in F$ with $\rho_{n}(x,y)<\varepsilon$. A subset $E$ of $X$ is called an \textbf{$(n,\varepsilon)$-separated} subset of $(X,\{f_n\}_{n=1}^{+\infty})$ if for any distinct $x,y\in E$, $\rho_n(x,y)>\varepsilon$. We denote by $r_n(X,\{f_n\}_{n=1}^{+\infty},\varepsilon)$ the smallest cardinality of all $(n,\varepsilon)$-spanning subsets of $(X,\{f_n\}_{n=1}^{+\infty})$, and $s_n(X,\{f_n\}_{n=1}^{+\infty},\varepsilon)$ the largest cardinality of all $(n,\varepsilon)$-separated subsets of $(X,\{f_n\}_{n=1}^{+\infty})$. It was proved in \cite[Lemma 3.1]{Kolyada-Snoha} that for every NADS $(X,\{f_n\}_{n=1}^{+\infty})$, we have
\begin{align*}
h_{top}(X,\{f_n\}_{n=1}^{+\infty})& =\lim\limits_{\varepsilon\to 0}\limsup\limits_{n\to+\infty}\frac{\log{s_n(X,\{f_n\}_{n=1}^{+\infty},\varepsilon)}}{n}\\
&=\lim\limits_{\varepsilon\to 0}\limsup\limits_{n\to+\infty}\frac{\log{r_n(X,\{f_n\}_{n=1}^{+\infty},\varepsilon)}}{n}.
\end{align*}

\subsection{Extensions}
Let $(X,T)$ and $(Y,S)$ be two TDSs. We say that $(X,T)$ is an \textbf{extension} of $(Y,S)$ if there is a continuous surjective map $\pi: X\to Y$ such that $\pi\circ T=S\circ \pi$. For two NADSs $(X,\{f_n\}_{n=1}^{+\infty})$ and $(Y,\{g_n\}_{n=1}^{+\infty})$, $(X,\{f_n\}_{n=1}^{+\infty})$ is said to be an \textbf{extension} of $(Y,\{g_n\}_{n=1}^{+\infty})$ if there is a continuous surjective map $\pi: X\to Y$ such that $\pi\circ f_{n}=g_{n}\circ \pi$ for every $n\geq 1$. In both of the above definitions, $\pi$ is called an \textbf{extension} (or a \textbf{factor map}), and if in addition, there exists $c>0$ such that $\sup_{y\in Y}\#\pi^{-1}(y)\le c$, then $\pi$ is called \textbf{finite-to-one}.

It is easy to see that if $(X,T)$ is an extension of $(Y,S)$ then $h_{top}(X,T)\geq h_{top}(Y,S)$. Bowen \cite{Bowen} gave an upper bound of extensions in his renowned work as follows.

\begin{thm}[{\cite[Theorem 17]{Bowen}}]\label{thm:finitetoone}
Let $(X,T)$ and $(Y,S)$ be two TDSs, and $\pi: (X,T)\to (Y,S)$ an extension. Then
$$h_{top}(X,T)\leq h_{top}(Y,S)+\sup\limits_{y\in Y}h_{top}(T,\pi^{-1}(y)).$$
In particular, if $\pi$ is finite-to-one, then $h_{top}(X,T)=h_{top}(Y,S)$.
\end{thm}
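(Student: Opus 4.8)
The statement to prove is Bowen's theorem, so the plan is to reproduce his argument in the spanning/separated-set formulation. Recall that for a (not necessarily $T$-invariant) subset $K\subseteq X$, the quantity $h_{top}(T,\pi^{-1}(y))$ in the statement is Bowen's set entropy $\lim_{\varepsilon\to0}\limsup_n\frac1n\log r_n(\varepsilon,K)$, where $r_n(\varepsilon,K)$ is the least cardinality of an $(n,\varepsilon)$-spanning set of $K$ for the metric $\rho_n(x,x')=\max_{0\le j<n}\rho(T^jx,T^jx')$. Put $a=\sup_{y\in Y}h_{top}(T,\pi^{-1}(y))$. I would first dispose of the trivial inequality $h_{top}(X,T)\ge h_{top}(Y,S)$: since $\pi$ is uniformly continuous and satisfies $\pi\circ T^j=S^j\circ\pi$, the contrapositive of uniform continuity lets one lift any $(n,\varepsilon)$-separated set of $Y$ (one preimage per point) to an $(n,\delta)$-separated set of $X$, and letting $n\to\infty$ then $\varepsilon\to0$ gives the bound. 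The content is the reverse inequality $h_{top}(X,T)\le h_{top}(Y,S)+a$.

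For the upper bound, fix $\varepsilon>0$ and $\gamma>0$. Because $h_{top}(T,\pi^{-1}(y))\le a$ and the limsup at a fixed scale is dominated by the set entropy, for each $y$ I can pick an integer $m(y)\in\N$ together with a set $G_y$ that $(m(y),\varepsilon/2)$-spans $\pi^{-1}(y)$ and has $|G_y|\le e^{m(y)(a+\gamma)}$. The first lemma I would prove is a robustness statement: for the \emph{fixed} length $m(y)$, upper semicontinuity of the fibers together with compactness of $\pi^{-1}(y)$ yields an open neighborhood $W_y\ni y$ such that the same $G_y$ in fact $(m(y),\varepsilon)$-spans $\pi^{-1}(y')$ for every $y'\in W_y$. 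Compactness of $Y$ then gives a finite subcover $W_{y_1},\dots,W_{y_k}$ with block lengths $m_i=m(y_i)$ and spanning sets $G_i$; I shrink the $Y$-scale $\delta$ so that each $G_i$ still $(m_i,\varepsilon)$-spans every fiber over the $\delta$-neighborhood of $W_i$.

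The core step is a block coding of separated sets of $X$. Given a long horizon $n$, fix a minimal $(n,\delta)$-spanning set $P$ of $Y$, of size roughly $e^{n(h_{top}(Y,S)+\gamma)}$. For $x\in X$, choose $w\in P$ with $\rho_n(\pi x,w)<\delta$; reading the orbit of $w$ and always selecting the least-indexed $W_i$ containing $S^tw$ produces a deterministic tiling of $[0,n)$ into consecutive blocks whose lengths lie in $\{m_1,\dots,m_k\}$. On the block starting at time $t_0$ with index $i$ we have $S^{t_0}w\in W_i$, hence $S^{t_0}\pi x$ lies in the $\delta$-neighborhood of $W_i$, so some $g\in G_i$ shadows $T^{t_0}x\in\pi^{-1}(S^{t_0}\pi x)$, i.e.\ $\rho_{m_i}(T^{t_0}x,g)<\varepsilon$. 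Recording the itinerary $(w;g^{(1)},g^{(2)},\dots)$ gives a map whose fibers have $\rho_n$-diameter $<2\varepsilon$, because two points with the same itinerary are $\varepsilon$-shadowed by the same $g^{(b)}$ on each block, hence $2\varepsilon$-close throughout $[0,n)$. Thus the itinerary map is injective on any $(n,2\varepsilon)$-separated set, and since the blocks tile $[0,n)$ up to a bounded final block ($\sum_b m_{i_b}\le n+\max_i m_i$) the number of itineraries is at most $|P|\,e^{(a+\gamma)(n+\max_i m_i)}\le e^{n(h_{top}(Y,S)+a+2\gamma)}$ for $n$ large. Taking $\frac1n\log$, $\limsup_n$, then $\gamma,\varepsilon\to0$ yields $h_{top}(X,T)\le h_{top}(Y,S)+a$. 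The finite-to-one case then follows at once: a set of at most $c$ points satisfies $r_n(\varepsilon,\pi^{-1}(y))\le c$, so $a=0$ and the two inequalities give $h_{top}(X,T)=h_{top}(Y,S)$.

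I expect the main obstacle to be the block-coding step, specifically the interplay of the variable block lengths with the counting. The decisive points are that the tiling is a deterministic function of the already-counted $Y$-orbit, so no extra combinatorial factor for the choice of decomposition enters, and that the block lengths sum to $n$ (up to a bounded last block), which is what makes the fiber contributions multiply up to the clean exponent $e^{(a+\gamma)n}$ rather than an uncontrolled power of $k$. Getting the quantifiers in the right order — fix $\varepsilon$ and $\gamma$ first, then choose each $m(y)$ and $W_y$, and only afterwards extract a finite subcover and send $n\to\infty$ — is essential, since the robustness neighborhoods $W_y$ genuinely depend on the fixed block length $m(y)$.
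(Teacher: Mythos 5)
The paper offers no proof of this statement at all: it is quoted verbatim from Bowen \cite{Bowen} as a known theorem, and your proposal is a correct reconstruction of Bowen's original argument --- fiberwise $(m(y),\varepsilon)$-spanning sets of size at most $e^{m(y)(a+\gamma)}$, robustness over a neighborhood $W_y$ via upper semicontinuity of fibers, a finite subcover, and a deterministic block coding of $(n,2\varepsilon)$-separated sets by a $Y$-spanning point plus fiber labels, with the key counting points (no combinatorial factor for the tiling, block lengths summing to at most $n+\max_i m_i$) handled correctly. One small repair: ``shrinking $\delta$ so that $G_i$ spans every fiber over the $\delta$-neighborhood of $W_i$'' is not automatic for the open sets $W_i$ as constructed, since spanning was only arranged over $W_i$ itself; the standard fix is either to extract the finite subcover from shrunken open sets $W_y'$ with $\overline{W_y'}\subset W_y$ (so a small $\delta$-neighborhood of $W_i'$ stays inside $W_i$), or to take $\delta$ a Lebesgue number of $\{W_i\}$ and select the block index so that the $\delta$-ball about $S^{t}w$ lies in $W_i$, after which your argument goes through verbatim.
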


\begin{rem}
In the case of NADSs, the assumption that for any $n\in\N_+$, $f_n$ is topologically conjugate to $g_n$ (via a homeomorphism $\pi_{n}: X\to Y$) is not sufficient to guarantee the equality $h_{top}(X,\{f_n\}_{n=1}^{+\infty})=h_{top}(Y,\{g_n\}_{n=1}^{+\infty})$. However, if all $\pi_n$'s are the same, then $h_{top}(X,\{f_n\}_{n=1}^{+\infty})=h_{top}(Y,\{g_n\}_{n=1}^{+\infty})$ holds (see \cite[Section 5.b]{Kolyada-Snoha}).
\end{rem}

\subsection{Induced systems}
Let $X$ be a compact metric space, $\mathcal{B}(X)$ the set of Borel subsets of $X$, $C(X)$ the space of continuous maps from $X$ to $\R$ endowed with the supremum norm ($||\cdot||_{\infty}$), and $\mathcal{M}(X)$ the set of Borel probability measures on $X$. The \textbf{weak$^\ast$-topology} is the smallest topology making the map
$$D_g:\mathcal{M}(X)\to\mathbb{R},\;\;\;\mu\mapsto\int_Xgd\mu$$ continuous for every $g\in C(X)$. A basis is given by the collection of all sets of the form
$$V_{\mu}(g_1,g_2,\dots,g_k;\varepsilon)=\left\{\nu\in\mathcal{M}(X):\left|\int g_id\mu-\int g_id\nu\right|<\varepsilon,\ 1\leq i\leq k\right\},$$
where $\mu\in \mathcal{M}(X)$, $g_1,g_2,\dots,g_k\in C(X)$, $k\in\N$ and $\varepsilon>0$. It is well known that $\mathcal{M}(X)$ is compact in the weak$^\ast$-topology \cite[Theorem 6.5]{Walters}.

Suppose that $\{g_n\}_{n=1}^{+\infty}$ is a dense subset of $C(X)$. By \cite[Theorem 6.4]{Walters}, the metric
$$D(\mu,\nu)=\sum\limits_{n=1}^{+\infty}\frac{|\int g_nd\mu-\int g_nd\nu|}{2^n(||g_{n}||_{\infty}+1)}$$
on $\mathcal{M}(X)$ is compatible with the weak$^\ast$-topology. So $\mathcal{M}(X)$ becomes a compact metric space as well.

A NADS $(X,\{f_n\}_{n=1}^{+\infty})$ induces a system $(\mathcal{M}(X),\{f_n^{\ast}\}_{n=1}^{+\infty})$, where $f_n^{\ast}:\mathcal{M}(X)\to \mathcal{M}(X)$ is given by $f_n^{\ast}(\mu)(B)=\mu(f_n^{-1}B)$ for each $n\in\N$, $\mu\in \mathcal{M}(X)$ and $B\in \mathcal{B}(X)$. We call $(\mathcal{M}(X),\{f_n^{\ast}\}_{n=1}^{+\infty})$ the \textbf{induced system} of $(X,\{f_n\}_{n=1}^{+\infty})$ and write $(\mathcal{M}(X),\{f_n\}_{n=1}^{+\infty})$ instead of $(\mathcal{M}(X),\{f_n^{\ast}\}_{n=1}^{+\infty})$ if there is no ambiguity.

\section{Proof of Theorem \ref{thm-A}(2)}\label{sec:prop}
Let $X$ be a compact metric space with the metric $\rho$ and $n\in\N_+$. The metric
$$d((x_1,x_2\dots,x_n),(y_1,y_2,\dots,y_n))=\max_{1\leq i\leq n}\rho(x_i,y_i)$$
on $X^n$ is compatible with the product topology. For a map $f: X\to X$, set
$$f^{(n)}=\underbrace{f\times f\times\cdots\times f}_{n\;\text{times}}:X^{n}\to X^n,\;\;\;(x_1,x_2,\dots,x_n)\mapsto (fx_1,fx_2,\dots,fx_n).$$
\begin{prop}\label{times}
Let $(X,\{f_n\}_{n=1}^{+\infty})$ be a NADS and $k\in\N_+$. Then
$$h_{top}(X^k,\{f_{n}^{(k)}\}_{n=1}^{+\infty})=k\cdot h_{top}(X,\{f_n\}_{n=1}^{+\infty}).$$
\end{prop}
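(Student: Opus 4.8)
The plan is to prove the equality $h_{top}(X^k,\{f_{n}^{(k)}\}_{n=1}^{+\infty})=k\cdot h_{top}(X,\{f_n\}_{n=1}^{+\infty})$ by establishing the two inequalities separately, working with the spanning/separating set characterization of entropy rather than with open covers, since the product metric $d$ on $X^k$ interacts cleanly with the dynamical metric $\rho_n$. The key observation to record first is that the dynamical metric $d_n$ on $X^k$ induced by $\{f_n^{(k)}\}$ satisfies
$$d_n\bigl((x_1,\dots,x_k),(y_1,\dots,y_k)\bigr)=\max_{1\le i\le k}\rho_n(x_i,y_i),$$
because $(f_1^{(k)})^j=(f_1^j)^{(k)}$ acts coordinatewise and the max defining $d$ commutes with the max defining $\rho_n$. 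Once this is in hand, the whole proof reduces to counting spanning (or separating) sets in a product space.

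For the inequality $h_{top}(X^k,\{f_n^{(k)}\})\le k\cdot h_{top}(X,\{f_n\})$, I would take an $(n,\varepsilon)$-spanning set $F$ for $(X,\{f_n\})$ of minimal cardinality $r_n(X,\{f_n\},\varepsilon)$. Then the product $F^k\subset X^k$ is $(n,\varepsilon)$-spanning for $(X^k,\{f_n^{(k)}\})$ with respect to $d_n$: given any $(x_1,\dots,x_k)$, choose $y_i\in F$ with $\rho_n(x_i,y_i)<\varepsilon$ coordinatewise, so that $d_n<\varepsilon$ by the max formula. Hence $r_n(X^k,\{f_n^{(k)}\},\varepsilon)\le r_n(X,\{f_n\},\varepsilon)^k$. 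Taking $\log$, dividing by $n$, passing to $\limsup_{n\to\infty}$ and then $\lim_{\varepsilon\to 0}$ yields the upper bound, using the $r_n$-formula for entropy quoted from \cite[Lemma 3.1]{Kolyada-Snoha}.

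For the reverse inequality $h_{top}(X^k,\{f_n^{(k)}\})\ge k\cdot h_{top}(X,\{f_n\})$, I would dualize using separated sets. Let $E$ be an $(n,\varepsilon)$-separated subset of $(X,\{f_n\})$ of maximal cardinality $s_n(X,\{f_n\},\varepsilon)$. The product $E^k$ is then $(n,\varepsilon)$-separated in $(X^k,\{f_n^{(k)}\})$: any two distinct tuples differ in some coordinate $i$, whence $\rho_n(x_i,y_i)>\varepsilon$ forces $d_n>\varepsilon$. Thus $s_n(X^k,\{f_n^{(k)}\},\varepsilon)\ge s_n(X,\{f_n\},\varepsilon)^k$, and applying $\log$, dividing by $n$, and taking the appropriate limits via the $s_n$-formula for entropy gives $h_{top}(X^k,\{f_n^{(k)}\})\ge k\cdot h_{top}(X,\{f_n\})$.

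The main subtlety I anticipate is not in either counting step but in the order of limits, specifically when $h_{top}(X,\{f_n\})=+\infty$: one must argue that both inequalities survive the $\liminf/\limsup$ manipulations with the $\varepsilon\to 0$ limit taken last, and that raising to the $k$-th power inside the $\log$ produces exactly the factor $k$ rather than only an inequality after the limits. Since $\log(a^k)=k\log a$ is an exact identity and multiplication by the constant $k$ commutes with both $\limsup$ and $\lim_{\varepsilon\to0}$, the two bounds match and the equality follows; the potentially delicate infinite case is handled uniformly because both sides are then $+\infty$. The coordinatewise factorization of $d_n$ is what makes everything exact, so verifying that formula carefully at the outset is the one step worth doing with full rigor.
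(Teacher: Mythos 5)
Your proposal is correct and follows essentially the same route as the paper's own proof: both bound $r_n(X^k,\{f_n^{(k)}\},\varepsilon)\le r_n(X,\{f_n\},\varepsilon)^k$ via products of spanning sets and $s_n(X^k,\{f_n^{(k)}\},\varepsilon)\ge s_n(X,\{f_n\},\varepsilon)^k$ via products of separated sets, using the exact factorization $d_n\bigl((x_1,\dots,x_k),(y_1,\dots,y_k)\bigr)=\max_{1\le i\le k}\rho_n(x_i,y_i)$ and the Kolyada--Snoha spanning/separating characterization of entropy. Your extra care about the order of limits and the infinite-entropy case is sound but not treated differently in the paper, which passes to the same limits with the constant $k$ pulled out.
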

\begin{proof} For fixed $m\in\N$ and $\varepsilon>0$, we let $E$ be an $(m,\varepsilon)$-spanning set of $(X,\{f_n\}_{n=1}^{+\infty})$ with $\#E=r_m(X,\{f_n\}_{n=1}^{+\infty},\varepsilon)$. Then for any $x=(x_1,x_2,\dotsc,x_k)\in X^k$, there exists $y=(y_1,y_2,\dotsc y_k)\in E^k$ such that $\rho_m(x_i,y_i)<\varepsilon$ for $i=1,2,\dots,k$.
Thus,
\begin{align*}\rho_m(x,y)&=\max_{0\leq j\leq m-1}d\left((f_1^jx_{1},\dotsc,f_1^jx_{k}),(f_1^jy_{1},\dotsc,f_1^jy_{k})\right)\\
&=\max_{0\leq j\leq m-1}\max_{1\leq i\leq k}\rho(f_1^jx_i,f_1^jy_i)\\
&=\max_{1\leq i\leq k}\rho_m(x_i,y_i)\\
&<\varepsilon.
\end{align*}
This implies that $E^{k}$ is an $(m,\varepsilon)$-spanning set of $X^{k}$, and hence
$$r_m(X^k,\{f^{(k)}_{n}\}_{n=1}^{+\infty},\varepsilon)\leq \#(E^{k})=(r_m(X,\{f_n\}_{n=1}^{+\infty},\varepsilon))^k$$
for any $m\in\N $ and $\varepsilon>0$. Therefore,
\begin{align}\label{entropy1}
h_{top}(X^k,\{f^{(k)}_{n}\}_{n=1}^{+\infty})&=\lim_{\varepsilon\to 0}\limsup_{m\to +\infty}\frac{1}{m}\log r_m(X^k,\{f^{(k)}_{n}\}_{n=1}^{+\infty},\varepsilon)\nonumber\\
&\leq \lim_{\varepsilon\to 0}\limsup_{m\to +\infty}\frac{k}{m}\log r_m(X,\{f_n\}_{n=1}^{+\infty},\varepsilon)\nonumber\\
&=k\cdot h_{top}(X,\{f_n\}_{n=1}^{+\infty}).
\end{align}

For fixed $n'\in\N $ and $\varepsilon'>0$, we assume that $F$ is an $(n',\varepsilon')$-separated set of $(X,\{f_n\}_{n=1}^{+\infty})$ with $\#F=s_{n'}(X,\{f_n\}_{n=1}^{+\infty},\varepsilon')$. For any two distinct points $x=(x_1,x_2,\dotsc,x_k)$ and
$y=(y_1,y_2,\dotsc,y_k)$ in $F^k$, we have
\begin{align*}
d_{n'}(x,y)&=\max_{0\leq j\leq n'-1}d\left((f_1^jx_{1},\dotsc,f_1^jx_{k}),(f_1^jy_{1},\dotsc,f_1^jy_{k})\right)\\
&=\max_{0\leq j\leq n'-1}\max_{1\leq i\leq k}\rho(f_1^jx_i,f_1^jy_i)\\
&=\max_{1\leq i\leq k}\rho_{n'}(x_i,y_i)\\
&>\varepsilon'.
\end{align*}
So $F^k$ is an $(n',\varepsilon')$-separated set of $(X^k,\{f^{(k)}_{n}\}_{n=1}^{+\infty})$, which means that
$$s_{n'}(X^k,\{f^{(k)}_{n}\}_{n=1}^{+\infty},\varepsilon')\geq \#(F^{k})=(s_{n'}(X,\{f_n\}_{n=1}^{+\infty},\varepsilon'))^k$$ for any $n'\in\N$ and $\varepsilon'>0$. Thus,
\begin{align}\label{entropy2}
h_{top}(X^k,\{f^{(k)}_{n}\}_{n=1}^{+\infty}) &=\lim_{\varepsilon'\to 0}
\limsup_{n'\to+\infty}\frac{1}{n'}\log s_{n'}(X^k,\{f^{(k)}_{n}\}_{n=1}^{+\infty},\varepsilon')\nonumber\\
&\geq \lim_{\varepsilon'\to 0}\limsup_{n'\to+\infty}\frac{k}{n'}\log s_{n'}(X,\{f_n\}_{n=1}^{+\infty},\varepsilon')\nonumber\\
&=k\cdot h_{top}(X,\{f_n\}_{n=1}^{+\infty}).
\end{align}
By \eqref{entropy1} and \eqref{entropy2}, we get $h_{top}(X^k,\{f^{(k)}_{n}\}_{n=1}^{+\infty})=k\cdot h_{top}(X,\{f_n\}_{n=1}^{+\infty})$.
\end{proof}

\begin{prop}\label{injective}
Let $X$ be a compact metric space and $k\in\N_+$. Then the map
$$\pi_k:X^k\to\mathcal{M}(X),\;\; (x_1,x_2,\dotsc,x_k)\mapsto\frac{1}{\sum_{i=1}^{k}2^{i}}\sum\limits_{i=1}^{k}2^i\delta_{x_i}$$
is injective.
\end{prop}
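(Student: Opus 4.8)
The plan is to invert $\pi_k$ by reading off the atomic masses of the measure and exploiting the uniqueness of binary expansions. Write $S=\sum_{i=1}^{k}2^{i}$. For a tuple $(x_1,\dots,x_k)\in X^k$, set $\mu=\pi_k(x_1,\dots,x_k)$. For each point $p\in X$, the mass assigned to the singleton $\{p\}$ is
$$\mu(\{p\})=\frac{1}{S}\sum_{i\,:\,x_i=p}2^{i},$$
so the integer $S\cdot\mu(\{p\})$ equals $\sum_{i\in I_p}2^{i}$, where $I_p=\{i\in\{1,\dots,k\}:x_i=p\}$.

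The key observation is that, as $p$ ranges over the finitely many atoms of $\mu$, the index sets $I_p$ form a partition of $\{1,2,\dots,k\}$ into disjoint nonempty blocks, and by the uniqueness of binary representation the integer $S\mu(\{p\})$ determines $I_p$ without ambiguity: an index $i$ belongs to $I_p$ if and only if $2^{i}$ occurs in the binary expansion of $S\mu(\{p\})$. (Only the powers $2^{1},\dots,2^{k}$ can appear, since every index lies in $\{1,\dots,k\}$.) Thus from $\mu$ alone one recovers, for each index $i$, the unique atom $p$ with $i\in I_p$, and this atom is precisely $x_i$; hence the whole tuple is determined by $\mu$.

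To make this a formal argument I would assume $\pi_k(x_1,\dots,x_k)=\pi_k(y_1,\dots,y_k)$ and evaluate both measures on singletons to obtain $\sum_{i\,:\,x_i=p}2^{i}=\sum_{i\,:\,y_i=p}2^{i}$ for every $p\in X$. Applying the uniqueness of binary expansion to each such equality yields $\{i:x_i=p\}=\{i:y_i=p\}$ for all $p$, and therefore $x_i=y_i$ for every $i$, which is the desired injectivity. The only delicate point is the bookkeeping around coincident coordinates: were the $x_i$ all distinct the claim would be immediate, but the genuine content is that even when several coordinates collapse to a common point the partial sums of the weights never collide. This is guaranteed precisely by the super-increasing choice $2^{i}$, and no finer property of these weights is needed.
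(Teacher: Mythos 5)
Your proof is correct, but it runs along a different track than the paper's, even though both ultimately rest on the same key fact that a sum of distinct powers of $2$ determines its set of summands. You evaluate the two measures directly on singletons $\{p\}$ — legitimate, since equality in $\mathcal{M}(X)$ means equality as Borel set functions, so no weak$^\ast$ subtlety intervenes — obtaining $\sum_{i:x_i=p}2^i=\sum_{i:y_i=p}2^i$ for every $p$, and then uniqueness of binary expansion gives $\{i:x_i=p\}=\{i:y_i=p\}$ for all $p$ at once, recovering the whole tuple. The paper instead assumes the tuples differ, takes the minimal differing index $t$, builds a continuous Urysohn-type function $g\in C(X)$ with $g(x_t)=1$ and $g$ vanishing at the other coordinates, and distinguishes the two integrals $\int g\,d\bigl(\sum_i 2^i\delta_{x_i}\bigr)$ and $\int g\,d\bigl(\sum_i 2^i\delta_{y_i}\bigr)$ by a divisibility argument modulo $2^{t+1}$ (with a separate case $t=k$). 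Your version is more elementary: it needs no test-function construction, no minimal-index bookkeeping, and no case split, and it is genuinely an inversion procedure rather than a separation argument. What the paper's route buys is that it works entirely in the language of integrals against continuous functions, the same pairing that defines the weak$^\ast$-topology used throughout the paper — but, as your argument shows, that machinery is not logically needed for injectivity. Your parenthetical care about coincident coordinates is exactly the right delicate point, and the super-increasing property of the weights $2^1,\dotsc,2^k$ handles it, just as you say.
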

\begin{proof} Fix $x=(x_1,x_2,\dotsc,x_k)$ and $y=(y_1,y_2,\dotsc,y_k)$ in $X^{k}$. Set
$$t=\min\{i =1,2,\dotsc,k:x_i\ne y_i\}.$$ There exists a continuous function $g\in C(X)$ satisfying that $g(x_t)=1$ and that $g(z)=0$ for all $z\in\{x_1,x_2,\dotsc,x_k,y_1,y_2,\dotsc,y_k\}\setminus\{x_t\}$.
Then we have
$$\int gd\left(\sum\limits_{i=1}^{k}2^i\delta_{x_i}\right)=\int gd\left(\sum\limits_{i=1}^{t-1}2^i\delta_{x_i}\right) + \int gd\left(\sum\limits_{i=t}^{k}2^i\delta_{x_i}\right)$$
and $$\int gd\left(\sum\limits_{i=1}^{k}2^i\delta_{y_i}\right)=\int gd\left(\sum\limits_{i=1}^{t-1}2^i\delta_{y_i}\right)+\int gd\left(\sum\limits_{i=t}^{k}2^i\delta_{y_i}\right).$$
If $t=k$, then $$\int gd\left(\sum\limits_{i=t}^{k}2^i\delta_{x_i}\right)=2^k\neq0=\int gd\left(\sum\limits_{i=t}^{k}2^i\delta_{y_i}\right).$$ Otherwise, we have $$2^{t+1}\nmid \int gd\left(\sum\limits_{i=t}^{k}2^i\delta_{x_i}\right),\quad 2^{t+1}\mid \int gd\left(\sum\limits_{i=t}^{k}2^i\delta_{y_i}\right).$$ Summing up, $$\int gd(\sum\limits_{i=1}^{k}2^i\delta_{x_i})\neq\int gd(\sum\limits_{i=1}^{k}2^i\delta_{y_i}).$$ This implies $$\sum\limits_{i=1}^{k}2^i\delta_{x_i}\neq \sum\limits_{i=1}^{k}2^i\delta_{y_i}.$$ Thus, $\pi_k$ is injective.
\end{proof}
We are now ready to prove Theorem \ref{thm-A}(2).

\medskip

For any $k\in\N_+$, let $\pi_k$ be the map defined in Proposition \eqref{injective}. It is clear that $\pi_k$ is continuous and equivariant, which, together with the injectivity of $\pi_k$ that we just proved in Proposition \eqref{injective}, allows us to regard $(X^k,\{f_n^{(k)}\}_{n=1}^{+\infty})$ as a subsystem of $(\mathcal{M}(X),\{f_n\}_{n=1}^{+\infty})$. This implies that
$$h_{top}(\mathcal{M}(X),\{f_n\}_{n=1}^{+\infty})\ge h_{top}(X^k,\{f_n^{(k)}\}_{n=1}^{+\infty})=k\cdot h_{top}(X,\{f_n\}_{n=1}^{+\infty})$$
for all $k\in\N_+$. Since $h_{top}(X,\{f_n\}_{n=1}^{+\infty})>0$, we conclude $$h_{top}(\mathcal{M}(X),\{f_n\}_{n=1}^{+\infty})=+\infty.$$

\medskip

\section{Proof of Theorem \ref{thm-A}(1)}\label{sec:mainthm}
To begin with, we borrow a key lemma which has some combinatorial flavor.

\begin{lem}[{\cite[Proposition 2.1]{Glasner-Weiss}}]\label{lemma1}
For given constants $\varepsilon>0$ and $b>0$, there exist $n_0\in\N$ and a constant $c>0$ such that for all $n>n_0$, if $\phi$ is a linear mapping from $l_1^m$ to $l_{\infty}^{n}$ of norm
$$||\phi||=\sup\left\{||\phi(x)||_{\infty}:x\in l_1^m,||x||\leq 1\right\}\leq 1,$$
and if $\phi(B_1(l_1^m))$ contains more than $2^{bn}$ points that are $\varepsilon$-separated, then $m\geq 2^{cn}$, where $B_1(l_1^m):=\{y\in l_1^m:||y||\leq 1\}$.
\end{lem}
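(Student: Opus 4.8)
The plan is to translate the linear-algebraic hypothesis into a statement in convex geometry and then run an \emph{approximate Carath\'eodory} (Maurey-type) counting argument. First I would observe that $\phi(B_1(l_1^m))$ is exactly the symmetric convex hull $\mathrm{conv}\{\pm\phi(e_1),\dots,\pm\phi(e_m)\}$ of the $2m$ vectors $\pm\phi(e_i)$, each of which satisfies $\|\phi(e_i)\|_\infty\le\|\phi\|\le1$ and hence lies in the unit ball of $l_\infty^n$. Thus every point of $\phi(B_1(l_1^m))$ is a convex combination $p=\sum_t\mu_t w_t$ of $2m$ points $w_t$ in the unit ball of $l_\infty^n$, and the whole problem becomes: bound, in terms of $m$ and $n$, the number of $\varepsilon$-separated points such a convex hull can contain.

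The core step is a sparsification. I would show that every $p=\sum_t\mu_t w_t$ is approximated in $l_\infty$ to within $\varepsilon/2$ by an average $q=\frac1k\sum_{l=1}^k Z_l$ of only $k$ of the generators, for a suitable $k=k(\varepsilon,n)$. This is proved probabilistically: draw $Z_1,\dots,Z_k$ independently from $\{w_t\}$ with $\mathbb{P}(Z_l=w_t)=\mu_t$, so that $\mathbb{E}q=p$ and, in each coordinate $j$, $q_j-p_j$ is an average of $k$ i.i.d.\ bounded centered variables; Hoeffding's inequality followed by a union bound over the $n$ coordinates shows that with positive probability $\|q-p\|_\infty<\varepsilon/2$, so a good $q$ exists. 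Since the given points are pairwise $\varepsilon$-separated while each lies within $\varepsilon/2$ of its approximant, distinct points must yield distinct averages $q$. As the number of $k$-fold averages of $2m$ vectors is at most $\binom{2m+k-1}{k}\le(2m)^k$, we obtain $2^{bn}<(2m)^k$, whence $\log_2 m\ge \tfrac{bn}{k}-1$ and $m\ge 2^{cn}$ for an appropriate $c>0$ and all $n>n_0$.

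The step I expect to be the main obstacle is precisely this sup-norm approximation: controlling $\max_{1\le j\le n}|q_j-p_j|$ uniformly. A naive union bound over the $n$ coordinates forces the sample size $k$ to grow with $n$, which would degrade the exponent $c$; because $l_\infty^n$ has poor type, the empirical method costs a logarithmic-in-$n$ factor \emph{a priori}, and the genuine content of the statement is to absorb or remove it. The crux is therefore to keep $k$ essentially independent of $n$, depending only on $\varepsilon$ (equivalently, to produce an $\varepsilon/2$-net of the convex hull whose cardinality is polynomial in $m$); this is exactly where the constants $c$, $n_0$ and their dependence on $\varepsilon$ and $b$ are pinned down. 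Once that net estimate is secured, the counting and the implication $2^{bn}<(2m)^k\Rightarrow m\ge 2^{cn}$ are routine.
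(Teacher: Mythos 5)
Your reduction is fine as far as it goes: $\phi(B_1(l_1^m))=\mathrm{conv}\{\pm\phi(e_1),\dots,\pm\phi(e_m)\}$, the Hoeffding sparsification is correct, and the count $\binom{2m+k-1}{k}\le(2m)^k$ is right. But the proposal contains a genuine gap, and you have in fact located it yourself without closing it. The union bound over the $n$ coordinates forces $k\ge C(\varepsilon)\log n$ (you need $2n\,e^{-k\varepsilon^2/8}<1$), and with $k\asymp\varepsilon^{-2}\log n$ the inequality $2^{bn}<(2m)^k$ only yields $\log_2 m\gtrsim bn/\log n$, i.e.\ $m\ge 2^{cn/\log n}$ --- strictly weaker than the asserted $m\ge 2^{cn}$. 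Your closing sentence, ``once that net estimate is secured, the counting \dots [is] routine,'' begs the question: producing an $\varepsilon$-net of the convex hull of cardinality polynomial in $m$, \emph{uniformly in $n$}, is not a refinement of the Hoeffding argument but the entire content of the lemma; the $\log n$ loss is intrinsic to the empirical method (it is exactly the $\log n$ factor in the Carl--Pajor covering bound for convex hulls in $l_\infty^n$). The degradation is also fatal for the application in this paper: Section 4 needs $L_{N_i}\ge 2^{cN_i}$ to conclude $h_{top}(X,\{f_n\}_{n=1}^{+\infty})\ge c\log 2>0$, whereas $L_{N_i}\ge 2^{cN_i/\log N_i}$ gives $\frac{1}{N_i}\log L_{N_i}\to 0$ and the proof of Theorem \ref{thm-A}(1) collapses.

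For comparison: the paper itself offers no proof, quoting \cite[Proposition 2.1]{Glasner-Weiss}, and the argument there is structurally different from yours. It does not approximate points by sparse combinations; instead it exploits the fact that the number of separated points is exponential \emph{in $n$} (your pigeonhole only uses that it exceeds $(2m)^k$). First, one discretizes coordinates at scale comparable to $\varepsilon$ and applies a Sauer--Shelah-type lemma to extract a subset $J\subset\{1,\dots,n\}$ with $|J|\ge\delta n$ that is $\varepsilon'$-shattered: there are levels $a_j<b_j$ with $b_j-a_j\ge\varepsilon'$ such that every pattern $\eta\in\{0,1\}^J$ is realized by some point of $\phi(B_1(l_1^m))$. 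Second, a duality estimate converts shattering into a lower bound on $m$: writing $r_j\in B_\infty^m$ for the rows of the matrix of $\phi$, shattering forces $\mathbb{E}\,\bigl\|\sum_{j\in J}\sigma_j r_j\bigr\|_\infty\ge c'\varepsilon'|J|$ for Rademacher signs $\sigma_j$, while the subgaussian maximal inequality gives the upper bound $\sqrt{2|J|\log(2m)}$; comparing the two yields $\log m\ge c\,(\varepsilon')^2|J|\gtrsim n$, as required. (Alternatively, the Mendelson--Vershynin theorem that packing numbers are controlled by the fat-shattering dimension independently of the domain size would supply exactly the net estimate you postulate, but that is a substantial theorem, not a routine step.) So the missing idea in your proposal is the shattering extraction plus duality, which is what replaces the lossy union bound.
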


\medskip

Firstly, $(X,\{f_n\}_{n=1}^{+\infty})$ may be regarded as a subsystem of $(\mathcal{M}(X),\{f_n\}_{n=1}^{+\infty})$ by mapping $x\in X$ to $\delta_x\in\mathcal{M}(X)$, where
\[\delta_x(A)=\begin{cases}
1,& \text{if}\ x\in A \\
0,& \text{if}\ x\notin A
\end{cases}.\]
So $h_{top}(\mathcal{M}(X),\{f_n\}_{n=1}^{+\infty})=0$ implies $h_{top}(X,\{f_n\}_{n=1}^{+\infty})=0$.
\medskip

Now we assume $h_{top}(\mathcal{M}(X),\{f_n\}_{n=1}^{+\infty})>0$. We shall show $h_{top}(X,\{f_n\}_{n=1}^{+\infty})>0$. Let $\{g_n\}_{n=1}^{+\infty}$ be a sequence in $C(X)$ satisfying that $||g_n||\le 1$ for any $n\in\N_+$, and that
$$D(\mu,\nu)=\sum\limits_{n=1}^{+\infty}\frac{|\int g_n d\mu-\int g_nd\nu|}{2^n}$$
is a metric on $\mathcal{M}(X)$ giving the weak$^\ast$-topology.

Since $h_{top}(\mathcal{M}(X),\{f_n\}_{n=1}^{+\infty})>0$, there exist $a>0$ and $\varepsilon_0>0$ such that for any $0<\varepsilon<\varepsilon_0$ we can find an increasing sequence $\{N_i\}_{i=1}^{+\infty}\subset\N$ with $$s_{N_i}(\mathcal{M}(X),\{f_n\}_{n=1}^{+\infty},\varepsilon)>e^{aN_i}.$$
For any fixed $0<\varepsilon<\varepsilon_0$, there exists $K_0\in\N$ such that $\sum_{n=K_0+1}^{+\infty}1/2^n<\varepsilon/2$.
Since $g_n$ is continuous for any $n\in\N_+$, there exists $\delta>0$ such that $d(x,y)<\delta$ implies $d(g_n(x),g_n(y))<\varepsilon/9$, for all $x,y\in X$ and $n=1,2,\dotsc,K_0$.

Let $\mathcal{U}=\{U_1,U_2,\dotsc,U_d\}$ be an open cover of $X$ with $\text{diam}(\mathcal{U})<\delta$ and set $L_{N_i}=\mathcal{N}\left(\bigvee\limits_{j=0}^{N_i-1}f_1^{-j}\mathcal{U}\right)$. By the definition, we can take a subcover $\mathcal{V}=\{V_1,V_2,\dots,V_{L_{N_i}}\}$ of $\bigvee\limits_{j=0}^{N_i-1}f_{1}^{-j}\mathcal{U}$ of the minimal cardinality $L_{N_i}$. Set
$$A_1=V_1,\ A_2=V_2\setminus V_1,\dotsc,\ A_{L_{N_i}}=V_{L_{N_i}}\setminus\bigcup\limits_{j=1}^{L_{N_i}-1}V_j.$$
Then $\{A_1,A_2,\dotsc,A_{L_{N_i}}\}$ is a partition of $X$ and $A_i\neq\emptyset$ for every $i=1,2,\dotsc,L_{N_i}$. For each $i=1,2,\dotsc,L_{N_i}$ we take $z_i\in A_i$. Define $\phi: l_1^{L_{N_i}}\to l_{\infty}^{K_0\cdot N_i}$ by
$$\phi(\{x_k\}_{k=1}^{L_{N_i}})=\left\{\frac{1}{2^n}\sum\limits_{k=1}^{L_{N_i}}x_kg_n(f_1^jz_k)\right\}_{1\leq n\leq K_0,\,0\leq j\leq N_i-1}.$$
It is clear that $\phi$ is a linear mapping from $l_1^{L_{N_i}}$ to $l_{\infty}^{K_{0}\cdot N_i}$ with $||\phi||\le1$.

Next we will show that $\phi(B_1(l_1^{L_{N_i}}))$ contains more than $e^{aN_i}$ points that are $\varepsilon/(9\cdot 2^{K_0})$-separated. Let $E_i$ be an $(N_i,\varepsilon)$-separated subset of $\mathcal{M}(X)$ with
$$\#E_i=s_{N_i}(\mathcal{M}(X),\{f_n\}_{n=1}^{+\infty},\varepsilon)>e^{aN_i}.$$ For any distinct $\mu,\nu\in E_i$, we have $D_{N_i}(\mu,\nu)>\varepsilon$. Thus, there exists $0\leq j_0\leq N_i-1$ such that
\begin{align}\label{eq1}
\sum\limits_{n=1}^{K_0}\frac{\left|\int g_n(f_1^{j_0}x)d\mu(x)-\int g_n(f_1^{j_0}x)d\nu(x)\right|}{2^n}>\frac{\varepsilon}{2}.
\end{align}

We claim that for any distinct $\mu,\nu\in E_i$, the following vectors in $\phi(B_1(l_1^{L_{N_i}}))$ are $\varepsilon/(9\cdot 2^{K_0})$-separated:
$$\phi\left(\mu(A_1),\mu(A_2),\dotsc,\mu(A_{L_{N_i}})\right)\;\;\text{and}\;\;\phi\left(\nu(A_1),\nu(A_2),\dotsc,\nu(A_{L_{N_i}})\right).$$ If the claim is not true, then for any $1\leq n\leq K_0$ and $0\leq j\leq N_i-1$ we have
\begin{align}\label{eq2}
\frac{\left|\sum\limits_{k=1}^{L_{N_i}}\mu(A_k)g_n(f_1^jz_k)-\sum\limits_{k=1}^{L_{N_i}}\nu(A_k)g_n(f_1^jz_k)\right|}{2^n}\leq \frac{\varepsilon}{9\cdot2^{K_0}}.
\end{align}
On the other hand,
\begin{align}\label{eq3}
\left|\int g_n(f_1^jx)d\mu(x)-\int g_n(f_i^jx)d\nu(x)\right|\leq I_1+I_2+I_3,
\end{align}
where
$$I_1=\left|\int g_n(f_1^jx)d\mu(x)-\sum\limits_{k=1}^{L_{N_i}}\mu(A_k)g_n(f_1^jz_k)\right|,$$
$$I_2=\left|\sum\limits_{k=1}^{L_{N_i}}\mu(A_k)g_n(f_1^jz_k)-\sum\limits_{k=1}^{L_{N_i}}\nu(A_k)g_n(f_1^jz_k)\right|$$
and
$$I_3=\left|\int g_n(f_1^jx)d\nu(x)-\sum\limits_{k=1}^{L_{N_i}}\nu(A_k)g_n(f_1^jz_k)\right|.$$
For $k=1,2,\dotsc,L_{N_i}$, if $x\in A_k$ then $\rho(f_1^j(x),f_1^j(z_k))<\delta$ for all $j=0,1,\dotsc,N_i-1$. Thus we have
\begin{align*}
I_1 &=\left|\sum\limits_{k=1}^{L_{N_i}}\int_{A_k}g_n(f_1^jx)-g_n(f_1^jz_k)d\mu(x)\right|\\
 &\leq \sum\limits_{k=1}^{L_{N_i}}\int_{A_k}\left|g_n(f_1^jx)-g_n(f_1^jz_k)\right|d\mu(x)\\
 &\leq\sum\limits_{k=1}^{L_{N_i}}\mu(A_k)\cdot\frac{\varepsilon}{9}\\
 &=\frac{\varepsilon}{9}.
\end{align*}
Similarly, $I_3\leq\varepsilon/9$. By \eqref{eq2}, we know $I_2\leq\varepsilon/9$. So it follows from \eqref{eq3} that
$$\left|\int g_n(f_1^jx)d\mu(x)-\int g_n(f_i^jx)d\nu(x)\right|\leq\varepsilon/3$$ for all $n=1,2,\dotsc,K_0$ and $j=0,1,\dotsc,N_i-1$. This contradicts \eqref{eq1}. Therefore $\phi(\psi(E))\subset\phi(B_1(l_1^{L_{N_i}}))$ are $\varepsilon/(9\cdot 2^{K_0})$-separated, where $$\psi:E\to l_1^{L_{N_i}},\;\;\mu\mapsto\left(\mu(A_1),\mu(A_2),\dotsc,\mu(A_{L_{N_i}})\right).$$

To end the proof, we employ Lemma \ref{lemma1}. In the above discussion, we have shown that $\phi$ is a linear mapping from $l_1^{L_{N_i}}$ to $l_{\infty}^{K_0N_i}$ with $||\phi||\leq 1$ and that $\phi(B_1(l_1^{L_{N_i}}))$ contains more than $e^{aN_i}$ points which are $\varepsilon/(9\cdot 2^{K_0})$-separated. By Lemma \ref{lemma1}, there exist $n_0$ and a constant $c>0$ such that for all sufficiently large $i\in\N$ we have $L_{N_i}\geq 2^{cN_i}$. Thus,

\begin{align*}
h_{top}(X,\{f_n\}_{n=1}^{+\infty})&\ge\lim\limits_{i\to+\infty}\frac{1}{N_i}\log\mathcal{N}\left(\bigvee\limits_{j=0}^{N_i-1}f_1^j\mathcal{U}\right)\\
&\ge\lim_{i\to+\infty}\frac{1}{N_i}\log2^{cN_i}\\&=c\cdot\log2\\&>0.
\end{align*}

\medskip

\section{A constructive proof of Theorem \ref{exam:extension}}\label{sec:exam}
Let $\Sigma_2=\{0,1\}^\N $ and $\sigma: \Sigma_2\to\Sigma_2,\; (a_{n})_{n\in\N}\mapsto(a_{n+1})_{n\in\N}$. For $p\in\N$, $q\in\N_+$ and $i_1,\dotsc,i_q\in\{0,1\}$ we set
$$[i_1,i_2,\dotsc,i_q]_p^q=\left\{(a_{n})_{n\in\N}\in\Sigma_{2}:a_{p+j}=i_{j+1},\forall j=0,1,\dotsc,q-1\right\}.$$

We define
$$X=\{0\}\cup\{1\}\cup\left\{a\times\frac{1}{n}: a\in \Sigma_2, n\in\N_+ \right\},$$ where
$a\times({1}/{n})$ converges to $0$ as $n\to\infty$ for $a\in [0]_0^1$, and $a\times({1}/{n})$ converges to $1$ as $n\to\infty$ for  $a\in [1]_0^1$. We define
$$Y=\{0\}\cup\left\{a\times\frac{1}{n}: a\in\Sigma_2, n\in\N_+ \right\},$$
where $a\times({1}/{n})$ converges to $0$ as $n\to\infty$ for any $a\in \Sigma_2$.

For $n\in\N_+$ we take
$$f_n(x)=\begin{cases}\sigma(a)\times\frac{1}{i+1},&\text{if}\;x=a\times\frac1i\text{ and }i<n,\\x,&\text{otherwise}\end{cases}$$
and let $g_n$ be the restriction of $f_n$ to $Y$. Clearly, $\{f_n\}_{n=1}^{+\infty}$ and $\{g_n\}_{n=1}^{+\infty}$ are sequences of continuous maps on $X$ and $Y$, respectively.

We define a map $\pi:X\to Y$ by
$$\pi(x)=\begin{cases}0,&\text{ if }x=1,\\x,&\text{ otherwise}.\end{cases}$$
We may directly check that $\pi:X\to Y$ is a finite-to-one extension.
Now Theorem \ref{exam:extension} follows from Proposition \ref{pr}.

\begin{prop}\label{pr}
Under the above settings,
$$h_{top}(Y,\{g_n\}_{n=1}^{+\infty})+\sup_{y\in Y}h_{top}(\pi^{-1}(y),\{f_n\}_{n=1}^{+\infty})<h_{top}(X,\{f_n\}_{n=1}^{+\infty}).$$
\end{prop}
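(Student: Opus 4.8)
The plan is to prove the displayed strict inequality by showing that both terms on the left vanish while the right-hand side is at least $\log 2$. First I would dispose of the fiber term. Since $\pi$ identifies only the two points $0$ and $1$, both of which are fixed by every $f_n$, and is injective elsewhere, each fiber $\pi^{-1}(y)$ has at most two elements; any such set is $(N,\varepsilon)$-spanned by itself, so its spanning numbers are bounded by $2$ and its entropy is $0$. Hence $\sup_{y\in Y}h_{top}(\pi^{-1}(y),\{f_n\}_{n=1}^{+\infty})=0$, and it remains to compute the two genuine entropies. For this I would first record the orbit formula
$$f_1^{\,j}\!\left(a\times\tfrac1i\right)=\begin{cases}a\times\tfrac1i,& j\le i,\\ \sigma^{\,j-i}(a)\times\tfrac1j,& j> i,\end{cases}$$
so that a point occupying level $i$ at time $0$ sits at level $\max(i,j)$ at time $j$ (the identical formula holds in $Y$, since $g_n=f_n|_Y$); in particular every orbit is driven into arbitrarily high levels as time grows.

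Next I would prove $h_{top}(Y,\{g_n\}_{n=1}^{+\infty})=0$, the heart of the matter being that in $Y$ every high level is squeezed toward the single point $0$. Writing $C_n=\{a\times\frac1n:a\in\Sigma_2\}$ and $K_J=\{0\}\cup\bigcup_{n>J}C_n$, I would note that each $K_J$ is clopen and that $\{K_J\}_J$ decreases with $\bigcap_J K_J=\{0\}$; a routine compactness argument then gives $\mathrm{diam}(K_J)\to 0$. Fixing $\varepsilon>0$ and choosing $J=J(\varepsilon)$ with $\mathrm{diam}(K_J)<\varepsilon$, the orbit formula shows that for every $j>J$ the time-$j$ image of every point lies in $K_J$, so no two points can be more than $\varepsilon$ apart at any time beyond $J$. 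Consequently every $(N,\varepsilon)$-separated set is already $(J{+}1,\varepsilon)$-separated, whence $s_N(Y,\{g_n\}_{n=1}^{+\infty},\varepsilon)\le s_{J+1}(Y,\{g_n\}_{n=1}^{+\infty},\varepsilon)<\infty$ uniformly in $N$. This bound is independent of $N$, so $\limsup_N\frac1N\log s_N=0$, and letting $\varepsilon\to0$ yields $h_{top}(Y,\{g_n\}_{n=1}^{+\infty})=0$.

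To bound $h_{top}(X,\{f_n\}_{n=1}^{+\infty})$ from below I would use that in $X$ the levels do not collapse but split according to the leading symbol. Let $H_0=\{0\}\cup\{a\times\frac1n:a\in[0]_0^1\}$ and $H_1=\{1\}\cup\{a\times\frac1n:a\in[1]_0^1\}$; one checks that $\{H_0,H_1\}$ is a partition of $X$ into two clopen sets, hence an open cover $\mathcal U$. Because the leading symbol of $\sigma^{\,j-1}(a)$ is $a_{j-1}$, the orbit formula shows that the $\mathcal U$-itinerary of the level-one point $a\times 1$ along times $0,1,\dots,N-1$ reads $a_0,a_0,a_1,\dots,a_{N-2}$, so as $a$ ranges over $\Sigma_2$ exactly $2^{N-1}$ distinct itineraries occur. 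The atoms of $\bigvee_{j=0}^{N-1}f_1^{-j}\mathcal U$ are clopen and pairwise disjoint, so each nonempty one must appear in every subcover; hence $\mathcal N(\bigvee_{j=0}^{N-1}f_1^{-j}\mathcal U)\ge 2^{N-1}$, giving $h_{top}(\{f_n\}_{n=1}^{+\infty},\mathcal U)\ge\log2$ and therefore $h_{top}(X,\{f_n\}_{n=1}^{+\infty})\ge\log2$. Combining the three estimates yields $0+0<\log2\le h_{top}(X,\{f_n\}_{n=1}^{+\infty})$, which is the assertion.

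The step I expect to be the main obstacle is the vanishing $h_{top}(Y,\{g_n\}_{n=1}^{+\infty})=0$, and precisely the uniform collapse $\mathrm{diam}(K_J)\to0$ together with the resulting confinement of all separation to the first $J(\varepsilon)$ time steps. This uniform squeezing of every level onto $0$ is exactly the feature destroyed in $X$ by the splitting into the two clopen halves $H_0,H_1$, and it is what accounts for the strict drop of entropy under the finite-to-one factor map.
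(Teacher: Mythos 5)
Your proposal is correct, and its skeleton coincides with the paper's: both reduce the strict inequality to the three estimates $\sup_{y\in Y}h_{top}(\pi^{-1}(y),\{f_n\}_{n=1}^{+\infty})=0$ (every fiber has at most two points), $h_{top}(Y,\{g_n\}_{n=1}^{+\infty})=0$, and $h_{top}(X,\{f_n\}_{n=1}^{+\infty})\geq\log 2$, and for the last one you use exactly the paper's two-element clopen cover (your $H_0,H_1$ are the paper's $U_1,U_2$) together with an itinerary count. The only genuine divergence is in proving $h_{top}(Y,\{g_n\}_{n=1}^{+\infty})=0$: the paper works with the open-cover definition, replacing an arbitrary finite cover $\mathcal{V}$ by a refinement $\mathcal{V}^*$ built from the tail set $V_1$ and cylinder pieces on levels $\leq N_1$, and observing that $g_1^{-n}(\mathcal{V}^*)=\{Y,\emptyset\}$ for $n>N_1$, so the join stabilizes after finitely many steps; you instead use the Bowen-type characterization (recorded in the paper's Section 2 from Kolyada--Snoha), showing via the nested clopen tails $K_J$ with $\bigcap_J K_J=\{0\}$ that $\mathrm{diam}(K_J)\to 0$, hence all $(N,\varepsilon)$-separation is confined to times $\leq J(\varepsilon)$ and $s_N(Y,\{g_n\}_{n=1}^{+\infty},\varepsilon)\leq s_{J+1}(Y,\{g_n\}_{n=1}^{+\infty},\varepsilon)$ uniformly in $N$. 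Both arguments rest on the same dynamical fact---at time $n$ every orbit sits at level $\geq n$, and in $Y$ the high levels collapse onto the single point $0$---but your version buys a small convenience (no need to argue that covers of the special form $\mathcal{V}^*$ refine an arbitrary $\mathcal{V}$) at the cost of invoking the metric characterization of entropy; the paper's version stays entirely within the cover definition. A small bonus of your computation on the $X$ side: your itinerary $a_0,a_0,a_1,\dots,a_{N-2}$ correctly accounts for the fact that $f_1=\mathrm{id}_X$ (since $f_n$ moves only levels $i<n$), so the number of nonempty atoms of $\bigvee_{j=0}^{N-1}f_1^{-j}(\mathcal{U})$ is $2^{N-1}$ rather than the $2^N$ asserted in the paper---a harmless slip there, as either count yields the same lower bound $\log 2$.
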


\begin{proof}
We first notice that for every $y\in Y$, $h_{top}(\pi^{-1}(y),\{f_n\}_{n=1}^{+\infty})=0$, which means that the second term in the above inequality vanishes. So it remains to deal with the first and third terms. We will show $h_{top}(X,\{f_n\}_{n=1}^{+\infty})>0$ and $h_{top}(Y,\{g_n\}_{n=1}^{+\infty})=0$.

To show $h_{top}(X,\{f_n\}_{n=1}^{+\infty})>0$, we take a finite open cover $\mathcal{U}=\{U_1,U_2\}$ of $X$,
where
$$U_1=\{0\}\cup\left\{a\times\frac{1}{n}: a\in [0]_0^1,n\in\N_+\right\}$$
and
$$U_2=\{1\}\cup\left\{a\times\frac{1}{n}: a\in [1]_0^1,n\in\N_+\right\}.$$
By the construction of $\{f_n\}_{n=1}^{+\infty}$, it is not hard to check that for every $m\in\N_+$ we have
$$f_1^{-m}(\mathcal{U})=\{U_1^m,U_2^m\},$$
where
$$U_1^m=\{0\}\cup\left\{b\times \frac{1}{i}: b_{m-i}=0,i=1,2,\dotsc,m\right\}\cup\left\{b\times \frac{1}{i}: b_1=0,i\geq m+1\right\}$$
and
$$U_2^m=\{1\}\cup\left\{b\times \frac{1}{i}: b_{m-i}=1,i=1,2,\dotsc,m\right\}\cup\left\{b\times \frac{1}{i}: b_1=1,i\geq m+1\right\}.$$
Therefore
$$\mathcal{N}\left(\bigvee_{j=0}^{m-1}f_1^{-j}(\mathcal{U})\right)=2^{m},$$
and thus
\begin{align*}
h_{top}(X,\{f_n\}_{n=1}^{+\infty})&\geq h_{top}(\{f_n\}_{n=1}^{+\infty},\mathcal{U})\\
&=\lim_{N\to+\infty}\frac{\log\bigg(\mathcal{N}\big(\bigvee_{j=0}^{m-1}f_1^{-j}(\mathcal{U})\big)\bigg)}{m}\\
&\geq\lim_{N\to+\infty}\frac{\log2^{m}}{m}\\
&=\log2.
\end{align*}

\medskip
Next we show $h_{top}(Y,\{g_n\}_{n=1}^{+\infty})=0$. Let $\mathcal{V}$ be a finite open cover of $Y$. We choose sufficiently large $N_1,N_2\in\N_+$ such that
$$\mathcal{V^*}=\left\{V_1,V^n_{i_1,i_2,\dotsc,i_{N_2}}: i_1,i_2,\dotsc,i_{N_2}\in\{0,1\},1\leq n\leq N_1\right\}$$
is a refinement of $\mathcal{V}$, where
$$V_1=\{0\}\cup\left\{a\times\frac{1}{n}: a\in\Sigma_2,n>N_1\right\}$$ and
$$V^n_{i_1,i_2,\dotsc,i_{N_2}}=\left\{a\times\frac{1}{n}:a\in[i_1,i_2,\dotsc,i_{N_2}]_1^{N_2}\right\}$$ for all $i_1,i_2,\dotsc,i_{N_2}\in\{0,1\}$ and $1\leq n\leq N_1$. By the definition of $g_{n}$, for every $x\in Y$ and every integer $n>N_1$ we have $g_1^nx\in V_1$, that is, $g_1^{-n}(\mathcal{V^*})=\{Y,\emptyset\}$. Thus,
$$\mathcal{N}\left(\bigvee_{i=0}^{n-1}g_1^{-i}(\mathcal{V}^*)\right)=\mathcal{N}\left(\bigvee_{i=0}^{N_1}g_1^{-i}(\mathcal{V}^*)\right)$$
for all $n>N_1$.
Therefore,
\begin{align*}
h_{top}(\mathcal{V},\{g_n\}_{n=1}^{+\infty})&\le h_{top}(\mathcal{V^*},\{g_n\}_{n=1}^{+\infty})
=\lim_{n\to+\infty}\frac{\log\mathcal{N}\left(\bigvee_{i=0}^{N_1}g_1^{-i}(\mathcal{V}^*)\right)}{n}=0.
\end{align*}
Since $\mathcal{V}$ is arbitrary, we see that $h_{top}(Y,\{g_n\}_{n=1}^{+\infty})=0$.
\end{proof}

\bibliographystyle{amsplain}

\end{document}